\theoremstyle{plain}
\newtheorem{thm}{\protect\theoremname}[section]
\theoremstyle{definition}
\newtheorem{defn}[thm]{\protect\definitionname}
\theoremstyle{remark}
\newtheorem{rem}[thm]{\protect\remarkname}
\theoremstyle{plain}
\newtheorem{cor}[thm]{\protect\corollaryname}
\theoremstyle{plain}
\newtheorem{prop}[thm]{\protect\propositionname}
\theoremstyle{definition}
\newtheorem{notation}[thm]{\protect\notationname}
\DeclareMathSymbol{:}{\mathpunct}{operators}{"3A}
\providecommand{\corollaryname}{Corollary}
\providecommand{\definitionname}{Definition}
\providecommand{\notationname}{Notation}
\providecommand{\propositionname}{Proposition}
\providecommand{\remarkname}{Remark}
\providecommand{\theoremname}{Theorem}
\begin{document}
\global\long\def\sf#1{\mathsf{#1}}%

\global\long\def\scr#1{\mathscr{{#1}}}%

\global\long\def\cal#1{\mathcal{#1}}%

\global\long\def\bb#1{\mathbb{#1}}%

\global\long\def\frak#1{\mathfrak{#1}}%

\global\long\def\fr#1{\mathfrak{#1}}%

\global\long\def\u#1{\underline{#1}}%

\global\long\def\tild#1{\widetilde{#1}}%

\global\long\def\mrm#1{\mathrm{#1}}%

\global\long\def\pr#1{\left(#1\right)}%

\global\long\def\abs#1{\left|#1\right|}%

\global\long\def\inp#1{\left\langle #1\right\rangle }%

\global\long\def\br#1{\left\{  #1\right\}  }%

\global\long\def\norm#1{\left\Vert #1\right\Vert }%

\global\long\def\hat#1{\widehat{#1}}%

\global\long\def\opn#1{\operatorname{#1}}%

\global\long\def\bigmid{\,\middle|\,}%

\global\long\def\Top{\sf{Top}}%

\global\long\def\Set{\sf{Set}}%

\global\long\def\SS{\sf{sSet}}%

\global\long\def\BS{\sf{bsSet}}%

\global\long\def\Kan{\sf{Kan}}%

\global\long\def\Cat{\sf{Cat}}%

\global\long\def\RC{\sf{RelCat}}%

\global\long\def\Grpd{\mathcal{G}\sf{rpd}}%

\global\long\def\Res{\mathcal{R}\sf{es}}%

\global\long\def\imfld{\cal M\mathsf{fld}}%

\global\long\def\ids{\cal D\sf{isk}}%

\global\long\def\ich{\cal C\sf h}%

\global\long\def\SW{\mathcal{SW}}%

\global\long\def\SHC{\mathcal{SHC}}%

\global\long\def\Fib{\mathcal{F}\mathsf{ib}}%

\global\long\def\Bund{\mathcal{B}\mathsf{und}}%

\global\long\def\Fam{\cal F\sf{amOp}}%

\global\long\def\B{\sf B}%

\global\long\def\Spaces{\sf{Spaces}}%

\global\long\def\Mod{\sf{Mod}}%

\global\long\def\Nec{\sf{Nec}}%

\global\long\def\Fin{\sf{Fin}}%

\global\long\def\Ch{\sf{Ch}}%

\global\long\def\Ab{\sf{Ab}}%

\global\long\def\SA{\sf{sAb}}%

\global\long\def\P{\mathsf{POp}}%

\global\long\def\Op{\mathcal{O}\mathsf{p}}%

\global\long\def\Opg{\mathcal{O}\mathsf{p}_{\infty}^{\mathrm{gn}}}%

\global\long\def\Tup{\mathsf{Tup}}%

\global\long\def\H{\cal H}%

\global\long\def\Mfld{\cal M\mathsf{fld}}%

\global\long\def\Disk{\cal D\mathsf{isk}}%

\global\long\def\Acc{\mathcal{A}\mathsf{cc}}%

\global\long\def\Pr{\mathcal{P}\mathrm{\mathsf{r}}}%

\global\long\def\Del{\mathbf{\Delta}}%

\global\long\def\id{\operatorname{id}}%

\global\long\def\Aut{\operatorname{Aut}}%

\global\long\def\End{\operatorname{End}}%

\global\long\def\Hom{\operatorname{Hom}}%

\global\long\def\Ext{\operatorname{Ext}}%

\global\long\def\sk{\operatorname{sk}}%

\global\long\def\ihom{\underline{\operatorname{Hom}}}%

\global\long\def\N{\mathrm{N}}%

\global\long\def\-{\text{-}}%

\global\long\def\op{\mathrm{op}}%

\global\long\def\To{\Rightarrow}%

\global\long\def\rr{\rightrightarrows}%

\global\long\def\rl{\rightleftarrows}%

\global\long\def\mono{\rightarrowtail}%

\global\long\def\epi{\twoheadrightarrow}%

\global\long\def\comma{\downarrow}%

\global\long\def\ot{\leftarrow}%

\global\long\def\corr{\leftrightsquigarrow}%

\global\long\def\lim{\operatorname{lim}}%

\global\long\def\colim{\operatorname{colim}}%

\global\long\def\holim{\operatorname{holim}}%

\global\long\def\hocolim{\operatorname{hocolim}}%

\global\long\def\Ran{\operatorname{Ran}}%

\global\long\def\Lan{\operatorname{Lan}}%

\global\long\def\Sk{\operatorname{Sk}}%

\global\long\def\Sd{\operatorname{Sd}}%

\global\long\def\Ex{\operatorname{Ex}}%

\global\long\def\Cosk{\operatorname{Cosk}}%

\global\long\def\Sing{\operatorname{Sing}}%

\global\long\def\Sp{\operatorname{Sp}}%

\global\long\def\Spc{\operatorname{Spc}}%

\global\long\def\Fun{\operatorname{Fun}}%

\global\long\def\map{\operatorname{map}}%

\global\long\def\diag{\operatorname{diag}}%

\global\long\def\Gap{\operatorname{Gap}}%

\global\long\def\cc{\operatorname{cc}}%

\global\long\def\ob{\operatorname{ob}}%

\global\long\def\Map{\operatorname{Map}}%

\global\long\def\Rfib{\operatorname{RFib}}%

\global\long\def\Lfib{\operatorname{LFib}}%

\global\long\def\Tw{\operatorname{Tw}}%

\global\long\def\Equiv{\operatorname{Equiv}}%

\global\long\def\Arr{\operatorname{Arr}}%

\global\long\def\Cyl{\operatorname{Cyl}}%

\global\long\def\Path{\operatorname{Path}}%

\global\long\def\Alg{\operatorname{Alg}}%

\global\long\def\ho{\operatorname{ho}}%

\global\long\def\Comm{\operatorname{Comm}}%

\global\long\def\Triv{\operatorname{Triv}}%

\global\long\def\triv{\operatorname{triv}}%

\global\long\def\Env{\operatorname{Env}}%

\global\long\def\Act{\operatorname{Act}}%

\global\long\def\loc{\operatorname{loc}}%

\global\long\def\Assem{\operatorname{Assem}}%

\global\long\def\Nat{\operatorname{Nat}}%

\global\long\def\Conf{\operatorname{Conf}}%

\global\long\def\Rect{\operatorname{Rect}}%

\global\long\def\Emb{\operatorname{Emb}}%

\global\long\def\Homeo{\operatorname{Homeo}}%

\global\long\def\mor{\operatorname{mor}}%

\global\long\def\Germ{\operatorname{Germ}}%

\global\long\def\Post{\operatorname{Post}}%

\global\long\def\Sub{\operatorname{Sub}}%

\global\long\def\Shv{\operatorname{Shv}}%

\global\long\def\Cls{\operatorname{Cls}}%

\global\long\def\Seg{{\rm Seg}}%

\global\long\def\hc{\mathrm{hc}}%

\global\long\def\bi{\mathrm{bi}}%

\global\long\def\CSS{\mathrm{CSS}}%

\global\long\def\disj{\mathrm{disj}}%

\global\long\def\eq{\mathrm{eq}}%

\global\long\def\Top{\mathrm{Top}}%

\global\long\def\lax{\mathrm{lax}}%

\global\long\def\weq{\mathrm{weq}}%

\global\long\def\fib{\mathrm{fib}}%

\global\long\def\inert{\mathrm{inert}}%

\global\long\def\act{\mathrm{act}}%

\global\long\def\cof{\mathrm{cof}}%

\global\long\def\inj{\mathrm{inj}}%

\global\long\def\univ{\mathrm{univ}}%

\global\long\def\Ker{\opn{Ker}}%

\global\long\def\Coker{\opn{Coker}}%

\global\long\def\Im{\opn{Im}}%

\global\long\def\Coim{\opn{Im}}%

\global\long\def\coker{\opn{coker}}%

\global\long\def\im{\opn{\mathrm{im}}}%

\global\long\def\coim{\opn{coim}}%

\global\long\def\gn{\mathrm{gn}}%

\global\long\def\Mon{\opn{Mon}}%

\global\long\def\Un{\opn{Un}}%

\global\long\def\St{\opn{St}}%

\global\long\def\cun{\widetilde{\opn{Un}}}%

\global\long\def\cst{\widetilde{\opn{St}}}%

\global\long\def\Sym{\operatorname{Sym}}%

\global\long\def\CA{\operatorname{CAlg}}%

\global\long\def\Ind{\operatorname{Ind}}%

\global\long\def\rd{\mathrm{rd}}%

\global\long\def\xmono#1#2{\stackrel[#2]{#1}{\rightarrowtail}}%

\global\long\def\xepi#1#2{\stackrel[#2]{#1}{\twoheadrightarrow}}%

\global\long\def\adj{\stackrel[\longleftarrow]{\longrightarrow}{\bot}}%

\global\long\def\btimes{\boxtimes}%

\global\long\def\ps#1#2{\prescript{}{#1}{#2}}%

\global\long\def\ups#1#2{\prescript{#1}{}{#2}}%

\global\long\def\hofib{\mathrm{hofib}}%

\global\long\def\cofib{\mathrm{cofib}}%

\global\long\def\Vee{\bigvee}%

\global\long\def\w{\wedge}%

\global\long\def\t{\otimes}%

\global\long\def\bp{\boxplus}%

\global\long\def\rcone{\triangleright}%

\global\long\def\lcone{\triangleleft}%

\global\long\def\S{\mathsection}%

\global\long\def\p{\prime}%

\global\long\def\pp{\prime\prime}%

\global\long\def\W{\overline{W}}%

\global\long\def\o#1{\overline{#1}}%

\global\long\def\fp{\overrightarrow{\times}}%

\title{Classification diagrams of simplicial categories}
\author{Kensuke Arakawa}
\date{}
\begin{abstract}
We show that the classification diagram of a relative $\infty$-category
arising from a relative simplicial category is equivalent to the levelwise
nerve. Applications include the comparison of the diagonal of the
levelwise nerve and the homotopy coherent nerve, and a result on the
levelwise localizations of simplicial categories.
\end{abstract}
\keywords{classification diagram, homotopy coherent nerve, classifying space}
\subjclass[2020]{18N60, 55U40, 55U35}
\address{Department of Mathematics, Kyoto  University,Kitashirakawa-Oiwakecho, 6068502 Kyoto (Japan)},
\email{arakawa.kensuke.22c@st.kyoto-u.ac.jp}
\maketitle



\section{Introduction}

Given a category $\cal C$, the \textbf{nerve} of $\cal C$ is the
simplicial set $N\pr{\cal C}$ whose $n$-simplices are the set of
$n$ composable arrows $X_{0}\to\cdots\to X_{n}$ in $\cal C$. It
is immediate from the definitions that the association $\cal C\mapsto N\pr{\cal C}$
defines a fully faithful functor
\[
N:\Cat\to\SS
\]
from the category of small categories to that of simplicial sets.
As elementary as this observation may be, it puts forward an amazing
fact: A structured object (a $1$-category) can be presented as a
family of less structured objects (sets, or $0$-categories). Experience
suggests that the latter is generally easier to work with, and this
explains the prevalence of the nerve construction in homotopy theory.

It is natural to look for a higher-categorical analog of the nerve construction.
For example, we may ask whether it is possible to present an $\pr{\infty,1}$-category
as a simplicial $\pr{\infty,0}$-category, or a simplicial space.
The answer is yes. Recall that \textbf{$\infty$-categories} (alias
quasicateogries \cite{Joyal_qcat_Kan}) are a certain class of simplicial
sets modeling $\pr{\infty,1}$-categories. Given an $\infty$-category
$\cal C$, its \textbf{classifying diagram} $\Cls\pr{\cal C}$ is
the simplicial space (or a bisimplicial set, to be more precise) whose
$n$th space $\Cls\pr{\cal C}_{n}=\Cls\pr{\cal C}_{n,\ast}$ is the
maximal sub Kan complex of $\Fun\pr{\Delta^{n},\cal C}$. Joyal and
Tierney showed in \cite{JT07} that the association $\cal C\to\Cls\pr{\cal C}$
defines a fully faithful functor from the homotopy category of $\infty$-categories
to that of simplicial spaces, thereby giving an analog of the nerve
construction; the essential image of this functor consists of Rezk's
\textbf{complete Segal spaces} \cite{Rezk01}.

A generalization of the classifying diagram construction has proved
to be especially important in the theory of localizations of $\infty$-categories
(in the sense of \cite[Definition 2.4.2]{Landoo-cat}). Given an $\infty$-category
$\cal C$ and a subcategory $\cal W\subset\cal C$ containing all
objects, their \textbf{classification diagram} $\Cls\pr{\cal C,\cal W}$
\cite{Rezk01} is the simplicial $\infty$-category (hence a bisimplicial
set) whose $n$th $\infty$-category is given by
\[
\Cls\pr{\cal C,\cal W}_{n}=\Cls\pr{\cal C,\cal W}_{n,\ast}=\Fun\pr{\Delta^{n},\cal C}\times_{\cal C^{n+1}}\cal W^{n+1}.
\]
For example, if $\cal W$ is the subcategory of equivalences of $\cal C$,
then $\Cls\pr{\cal C,\cal W}$ is nothing but the classifying diagram
of $\cal C$. Inspired by earlier works such as \cite{Rezk01, MR2439415, MR2877402},
Mazel-Gee showed\footnote{To be more precise, Mazel-Gee proved this in the case where $\cal W$
contains all equivalences (which suffices for most applications).
The general case was proved by the author in \cite{A23b}.} in \cite{MR4045352} (see also \cite{A23b, AC25} for alternative arguments
and generalizations) that a functor $f:\cal C\to\cal D$ of $\infty$-categories
which carries $\cal W$ into $\cal D^{\simeq}$ is a localization
with respect to $\cal W$ if and only if the map $\Cls\pr{\cal C,\cal W}\to\Cls\pr{\cal D,\cal D^{\simeq}}=\Cls\pr{\cal D}$
is a weak equivalence of the complete Segal space model structure
\cite[Theorem 7.2]{Rezk01}. In other words, up to fibrant replacement,
$\Cls\pr{\cal C,\cal W}$ computes the localization of $\cal C$.
This is useful because in some cases, the fibrant replacement of $\Cls\pr{\cal C,\cal W}$
have explicit descriptions; see, e.g., \cite{LM15}.

Now recall the \textbf{homotopy coherent nerve functor}
\[
N_{\hc}:\Cat_{\Delta}\to\SS,
\]
due to Cordier \cite{Cordier_hcnerve}, is a right Quillen equivalence
from Bergner's model structure on the category of simplicial categories
\cite{BergnerSimpCat} to Joyal's model structure on the category
of simplicial sets \cite[$\S$2.2.5]{HTT}. Many important $\infty$-categories
arise as the homotopy coherent nerves of simplicial categories, so
we frequently want to consider bisimplicial sets of the form $\Cls\pr{N_{\hc}\pr{\cal C},N_{\hc}\pr{\cal W}}$,
where $\cal C$ is a fibrant simplicial category and $\cal W$ is
its simplicial subcategory such that $N_{\hc}\pr{\cal W}$ is a subcategory
of $N_{\hc}\pr{\cal C}$. However, as is anything constructed from
homotopy coherent nerves, this bisimplicial set is somewhat difficult
to manipulate by hands. It will therefore be nice if there is an alternative,
preferably simpler, presentation of this bisimplicial set. The goal
of this paper is to show that this is possible, at least if we consider
the \textit{marked} version of classification diagrams.

A \textbf{marked bisimplicial set} is a pair $\pr{X,S}$ of a bisimplicial
set $X$ and a simplicial subset $S\subset X_{1}=X_{1,\ast}$ which
contains the image of the map $X_{0}\to X_{1}$. Equivalently, it
is a simplicial object $\{\pr{X_{\ast,n},S_{n}}\}_{n\geq0}$ in the
category of marked simplicial sets. One may argue that the natural
place where classification diagrams live is not the category of bisimplicial
sets, but the category of\textit{ }marked bisimplicial sets. Indeed,
there is a very natural functor
\[
\Cls^{+}:\SS^{+}\to\pr{\SS^{+}}^{\Del^{\op}}=\BS^{+}
\]
from the category of marked simplicial sets to the category of marked
bisimplicial sets, defined by $\pr{X,S}\mapsto\pr{X,S}^{\pr{\Delta^{\bullet}}}$.
(Here we wrote $\pr{X,S}^{\Delta^{n}}=\pr{X,S}^{\pr{\Delta^{n}}^{\sharp}}$,
where $\pr{\Delta^{n}}^{\sharp}$ denotes the standard simplex $\Delta^{n}$
with all edges marked. It is the cotensor of $\pr{X,S}$ by the simplicial
set $\Delta^{n}$ with respect to the simplicial enrichment $\Map^{\sharp}\pr{-,-}$
of \cite[$\S$3.1.3]{HTT}.) Unwinding the definitions, we find that
$\Cls\pr{\cal C,\cal W}$ is nothing but the underlying bisimplicial
set of $\Cls^{+}\pr{\cal C,\cal W_{1}}$. 

In \cite{A23b}, the author showed that marked bisimplicial sets are
to complete Segal spaces what marked simplicial sets are to $\infty$-categories.
More precisely, \cite[Theorems 2.9 and  3.4]{A23b} state that $\BS^{+}$
admits a model structure, denoted by $\BS_{\CSS}^{+}$, such that:
\begin{itemize}
\item The functor $\Cls^{+}:\SS^{+}\to\BS^{+}$ is a right Quillen equivalence
, where $\SS^{+}$ carries the model structure for marked simplicial
sets \cite[Proposition 3.1.3.7, Remark 3.1.4.6]{HA}.\footnote{In \cite{A23b}, the functor $\Cls$ is denoted by $N$ and the functor
$\Cls^{+}$ is denoted by $\pr{t^{+}}^{!}$. We changed the notation
in the hope that the paper will be more readable, for we will consider
many variations of nerves in this paper.}
\item The forgetful functor $\BS_{\CSS}^{+}\to\BS_{\CSS}$ is also a right
Quillen equivalence, where $\BS_{\CSS}$ denotes the category of bisimplicial
sets equipped with the model structure for complete Segal spaces.
\end{itemize}
What we will do is to construct a relatively simple marked bisimplicial
set which is weakly equivalent to $\Cls^{+}\pr{N_{\hc}\pr{\cal C},N_{\hc}\pr{\cal W}}$
in $\BS_{\CSS}^{+}$. To give a precise statement of the main theorem,
we must introduce some notation and terminology.
\begin{defn}
\label{def:relscat}Let $\cal C$ be a simplicial category. A simplicial
subcategory $\cal W\subset\cal C$ is said to be \textbf{wide} if
it satisfies the following pair of conditions:
\begin{itemize}
\item $\cal W$ contains all objects of $\cal C$.
\item For every pair of objects $X,Y\in\cal C$, the simplicial subset $\cal W\pr{X,Y}\subset\cal C\pr{X,Y}$
is a union of components of $\cal C\pr{X,Y}$. 
\end{itemize}
The pair $\pr{\cal C,\cal W}$ of a simplicial category and its wide
simplicial subcategory will be called a \textbf{relative simplicial
category}.
\end{defn}

\begin{defn}
Let $\cal C$ be a simplicial category. For each $n\geq0$, we let
$\cal C_{n}$ denote the ordinary category constructed from the objects
of $\cal C$ and the $n$-simplices of the hom-simplicial sets of
$\cal C$. The \textbf{binerve} (or the \textbf{levelwise nerve})
of $\cal C$ is the bisimplicial set $N_{\bi}\pr{\cal C}$ whose $n$th
row $N_{\bi}\pr{\cal C}_{\ast,n}$ is the nerve of $\cal C_{n}$.
Note that for each $n\geq1$, the $n$th column $N_{\bi}\pr{\cal C}_{n,\ast}$
of $N_{\bi}\pr{\cal C}$ is the disjoint union
\[
\coprod_{X_{0},\dots,X_{n}\in\cal C}\cal C\pr{X_{n-1},X_{n}}\times\cdots\times\cal C\pr{X_{0},X_{1}}.
\]
If $\cal W\subset\cal C$ is a wide simplicial subcategory, we let
$N_{\bi}^{+}\pr{\cal C,\cal W}$ denote the marked bisimplicial set
$\pr{N_{\bi}\pr{\cal C},\coprod_{X,Y\in\cal C}\cal W\pr{X,Y}}$. 
\end{defn}

We can now state the main result of this paper.
\begin{thm}
[Theorem \ref{thm:main}]\label{thm:intro}Let $\cal C$ be a fibrant
simplicial category and let $\cal W\subset\cal C$ be a wide simplicial
subcategory. There is a weak equivalence
\[
N_{\bi}^{+}\pr{\cal C,\cal W}\to\Cls^{+}\pr{N_{\hc}\pr{\cal C},\mor\cal W_{0}}
\]
of $\BS_{\CSS}^{+}$ which is natural in $\pr{\cal C,\cal W}$, where
$\mor\cal W_{0}$ denotes the set of morphisms of $\cal W_{0}$.
\end{thm}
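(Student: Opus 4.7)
The plan is to construct a natural map $\phi$, identify both sides as (marked) Segal spaces, and verify $\phi$ is a Dwyer--Kan equivalence, from which the $\BS_{\CSS}^+$-weak equivalence follows.

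I would first construct $\phi:N^+_\bi(\cal C,\cal W)\to\Cls^+(N_\hc(\cal C),\mor\cal W_0)$ at the level of $(n,k)$-bisimplices, sending a sequence $(X_0,\dots,X_n,\phi_1,\dots,\phi_n)$ with $\phi_i:\Delta^k\to\cal C(X_{i-1},X_i)$ to the simplicial functor $\fr C[\Delta^k\times\Delta^n]\to\cal C$ whose object map is $(a,i)\mapsto X_i$ and whose hom-component is determined by strict composition of the $\phi_i$'s along the projection $\fr C[\Delta^k\times\Delta^n]\to\fr C[\Delta^n]$. The resulting simplicial map $\Delta^k\times\Delta^n\to N_\hc(\cal C)$ has vertex restrictions constant at the $X_i$'s, which lie in $\cal W$ tautologically; moreover, by wideness of $\cal W$, the source marking $\coprod_{X,Y}\cal W(X,Y)$ is carried into the target marking.

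The source is a (marked) Segal space: since $\cal C$ is Bergner-fibrant, each hom-space $\cal C(X,Y)$ is Kan, each column $N_\bi(\cal C)_{n,*}=\coprod_{X_0,\dots,X_n}\cal C(X_0,X_1)\times\cdots\times\cal C(X_{n-1},X_n)$ is Kan, and the Segal maps are isomorphisms on the nose. The target $\Cls^+(N_\hc(\cal C),\mor\cal W_0)$ is fibrant in $\BS_{\CSS}^+$, being $\Cls^+$ applied to a marked simplicial set and $\Cls^+$ being right Quillen from $\SS^+$ to $\BS_{\CSS}^+$ by \cite{A23b}.

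Finally I would verify $\phi$ is a Dwyer--Kan equivalence. Essential surjectivity is clear: every vertex of $\Cls^+(N_\hc(\cal C),\mor\cal W_0)_{0,*}=N_\hc(\cal C)$ is an object of $\cal C$, already in the image of $\phi_0$. For the derived mapping spaces at a pair $(X,Y)\in\ob\cal C$: the mapping space of the target Segal space at $(X,Y)$ is equivalent to $\Map_{N_\hc(\cal C)}(X,Y)$, which by the Quillen equivalence $\fr C\dashv N_\hc$ applied to fibrant $\cal C$ is Kan-equivalent to $\cal C(X,Y)$; and $\phi$ restricted to the appropriate fiber realizes the standard comparison. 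The marking analogue matches the relevant unions of components on both sides by wideness. Invoking the characterization of weak equivalences in $\BS_{\CSS}^+$ from \cite{A23b}, a DK equivalence of marked Segal spaces with fibrant target yields the desired $\BS_{\CSS}^+$-weak equivalence. The main technical obstacle I anticipate is the clean identification of the target's derived mapping space with $\cal C(X,Y)$ via $\phi$, and promoting the unmarked DK argument to a marked $\BS_{\CSS}^+$-equivalence via the characterization in \cite{A23b}.
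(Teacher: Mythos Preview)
Your unmarked Dwyer--Kan argument is essentially the paper's Proposition~\ref{prop:main_um}: one checks that (a Reedy fibrant replacement of) $N_{\bi}(\cal C)$ is a Segal space, that $\phi$ is bijective on objects, and that on fibers over $(X,Y)$ it is the standard comparison $\cal C(X,Y)\to\Hom_{N_{\hc}(\cal C)}(X,Y)$. One caveat: you assert the source is a Segal space because each column is Kan, but Rezk's definition requires Reedy fibrancy, not just levelwise fibrancy; the paper takes a Reedy fibrant replacement before invoking \cite[Theorem~7.7]{Rezk01}.

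The genuine gap is in your passage to the marked statement. You claim that $\Cls^{+}(N_{\hc}(\cal C),\mor\cal W_{0})$ is fibrant in $\BS_{\CSS}^{+}$ because $\Cls^{+}$ is right Quillen. But right Quillen functors preserve fibrancy only of fibrant objects, and $(N_{\hc}(\cal C),\mor\cal W_{0})$ is fibrant in $\SS^{+}$ only when $\mor\cal W_{0}$ is \emph{exactly} the set of equivalences of $N_{\hc}(\cal C)$. For a general wide $\cal W$ this fails in both directions: $\cal W$ may omit equivalences or contain non-equivalences. Consequently the target need not be fibrant in $\BS_{\CSS}^{+}$, and the ``DK equivalence with fibrant target'' criterion you invoke does not apply. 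Relatedly, there is no off-the-shelf notion of ``marked Segal space'' in \cite{A23b} intermediate between Reedy-fibrant and $\BS_{\CSS}^{+}$-fibrant that would make your final step go through.

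The paper circumvents this by a case analysis. It first proves the theorem when $\cal W$ is the minimal wide subcategory containing all homotopy equivalences (where your fibrancy claim \emph{is} valid and the unmarked Proposition~\ref{prop:main_um} suffices). It then passes through the adjunction $\diag^{+}\dashv\Cls^{+}$ to reduce the marked statement to showing that $B^{+}(\cal C,\cal W)\to(N_{\hc}(\cal C),\mor\cal W_{0})$ is a weak equivalence in $\SS^{+}$; this is handled first for $\cal W$ containing all equivalences (using that the marked edges of $B^{+}(\cal C,\cal W)$ are exactly the preimage of $\mor\cal W_{0}$, together with surjectivity on edges), and finally for arbitrary $\cal W$ by enlarging to $\cal W'=\cal W\cup\cal C^{\simeq}$ and noting that both $N_{\bi}^{+}(\cal C,\cal W)\to N_{\bi}^{+}(\cal C,\cal W')$ and $(N_{\hc}(\cal C),\mor\cal W_{0})\to(N_{\hc}(\cal C),\mor\cal W'_{0})$ are weak equivalences.
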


Notice how Theorem \ref{thm:intro} simplifies the clunky marked bisimplicial
set $\Cls^{+}\pr{N_{\hc}\pr{\cal C},\mor\cal W_{0}}$: If we want
to directly work $\Cls^{+}\pr{N_{\hc}\pr{\cal C},\mor\cal W_{0}}$,
we have to construct coherent higher homotopies governing the homotopy
coherent nerve, which is often a hard labor. In contrast, the rows
of $N_{\bi}\pr{\cal C}$ are nerves of \textit{ordinary} categories,
which has no higher structures.
\begin{rem}
\label{rem:DK}The idea of relating the localization of a relative
simplicial category $\pr{\cal C,\cal W}$ (which corresponds to $\Cls^{+}\pr{N_{\hc}\pr{\cal C},\mor\cal W_{0}}$)
to those of $\pr{\cal C_{n},\cal W_{n}}$ (which correspond to the
rows of $N_{\bi}^{+}\pr{\cal C,\cal W}$) is reminiscent of the work
of Dwyer and Kan: In \cite{DK80_1}, Dwyer and Kan defined the \textit{simplicial
localizations} of relative simplicial categories by first defining
them for relative categories, and then applying them levelwise to
define them for all cases. We can therefore interpret Theorem \ref{thm:intro}
as another manifestation of Dwyer and Kan's principle that the localization
of a relative simplicial category is the totality of the levelwise
localization. 
\end{rem}

\begin{rem}
By the works of Joyal \cite{JoyalQCatsCat} and Joyal and Tierney
\cite{JT07}, it has been known that $N_{\bi}\pr{\cal C}$ and $\opn{Cls}\pr{N_{\hc}\pr{\cal C}}$
are weakly equivalent in the complete Segal space model structure.
Theorem \ref{thm:intro} may be regarded as a refinement of this.
\end{rem}

In addition to simplifying the marked classification diagrams, Theorem
\ref{thm:intro} also has some interesting applications. We list two
of them below. 

The first one exploits the connection between localizations and levelwise
localizations we observed in Remark \ref{rem:DK}:
\begin{cor}
[Corollary \ref{cor:appl1}]\label{cor:intro3}Let $\pr{\cal C,\cal W}$
and $\pr{\cal C',\cal W'}$ be relative simplicial categories, where
$\cal C$ and $\cal C'$ are fibrant. Let $f:\cal C\to\cal C'$ be
a simplicial functor which carries $\cal W$ into $\cal W'$. Suppose
that for each $n\geq0$, the functor
\[
N\pr{\cal C_{n}}[N\pr{\cal W_{n}}^{-1}]\to N\pr{\cal C'_{n}}[N\pr{\cal W'_{n}}^{-1}]
\]
is a categorical equivalence (i.e., a weak equivalence in the Joyal model structure). Then so is the functor
\[
N_{\hc}\pr{\cal C}[N_{\hc}\pr{\cal W}^{-1}]\to N_{\hc}\pr{\cal C'}[N_{\hc}\pr{\cal W'}^{-1}].
\]
\end{cor}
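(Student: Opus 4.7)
The plan is to apply Theorem \ref{thm:intro} to reduce the statement to a claim about binerves, where the rowwise hypothesis translates directly into a rowwise $\SS^+$-equivalence. First observe that the $1$-simplices of $N_{\hc}(\cal W)$ are precisely the morphisms of $\cal W_0$, so $N_{\hc}(\cal C)[N_{\hc}(\cal W)^{-1}]$ coincides with the localization of $N_{\hc}(\cal C)$ at $\mor \cal W_0$. Applying Theorem \ref{thm:intro} naturally in $(\cal C,\cal W)$ and $(\cal C',\cal W')$ produces a commutative square in $\BS^+$ with horizontal arrows the natural maps supplied by Theorem \ref{thm:intro} and vertical arrows induced by $f$; by Theorem \ref{thm:intro} the horizontal arrows are weak equivalences in $\BS^+_{\CSS}$. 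Since $\Cls^+\colon \SS^+\to \BS^+_{\CSS}$ is a right Quillen equivalence, and since weak equivalences in Lurie's marked model structure on $\SS^+$ are exactly maps inducing categorical equivalences on localizations (because fibrant replacement in $\SS^+$ is the localization), the conclusion of the corollary is equivalent to the right vertical being a weak equivalence in $\BS^+_{\CSS}$. By two-out-of-three, this reduces to showing that the left vertical $N_{\bi}^+(\cal C,\cal W)\to N_{\bi}^+(\cal C',\cal W')$ is a weak equivalence in $\BS^+_{\CSS}$.

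For this, inspect the rows. The $n$-th row of $N_{\bi}^+(\cal C,\cal W)$ is the marked simplicial set $(N(\cal C_n),\mor \cal W_n)$; as $n$ varies, these assemble into a simplicial object in $\SS^+$ obtained by applying nerve-and-marking to the simplicial diagram $[n]\mapsto(\cal C_n,\cal W_n)$ of ordinary relative categories. The hypothesis of the corollary states precisely that for each $n$ the induced row map $(N(\cal C_n),\mor\cal W_n)\to(N(\cal C'_n),\mor\cal W'_n)$ is a weak equivalence in $\SS^+$. So the map of binerves is a rowwise $\SS^+$-weak equivalence.

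I finish by invoking the rowwise detection principle for $\BS^+_{\CSS}$: a rowwise $\SS^+$-weak equivalence of marked bisimplicial sets is a weak equivalence in $\BS^+_{\CSS}$. This follows from the construction of $\BS^+_{\CSS}$ in \cite{A23b} as a left Bousfield localization of a Reedy/injective-type model structure on $\BS^+$ whose weak equivalences can be detected rowwise in $\SS^+$. The principal obstacle is reconciling the column-indexed simplicial-object description of $\BS^+$ used in \cite{A23b} to place the marking on $X_{1,\ast}$ with the row-indexed presentation natural to this corollary, and verifying that the rowwise detection survives this reconciliation; granting this, the map of binerves is a weak equivalence in $\BS^+_{\CSS}$, and translating back along the horizontal equivalences of the first paragraph completes the proof.
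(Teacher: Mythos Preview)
Your argument is correct and matches the paper's proof: reduce via Theorem~\ref{thm:main} to the map $N_{\bi}^{+}(\cal C,\cal W)\to N_{\bi}^{+}(\cal C',\cal W')$, observe it is a rowwise $\SS^{+}$-equivalence by hypothesis, and conclude using that $\BS_{\CSS}^{+}$ is a Bousfield localization of the Reedy structure on $(\SS^{+})^{\Del^{\op}}$. Two small points: the ``principal obstacle'' you flag does not arise, since in the paper's convention the simplicial-object direction of $\BS^{+}=(\SS^{+})^{\Del^{\op}}$ \emph{is} the row direction (the marking $S\subset X_{1,\ast}$ restricts at level $n$ to $S_{n}\subset X_{1,n}=(X_{\ast,n})_{1}$), so rowwise detection is exactly part~(5) of Theorem~\ref{thm:CSS+}; and to pass from the right vertical being a $\BS_{\CSS}^{+}$-equivalence back to the conclusion you should invoke part~(6) of Theorem~\ref{thm:CSS+} (that $\Cls^{+}$ reflects \emph{all} weak equivalences), not merely the right Quillen equivalence property, because $(N_{\hc}(\cal C),\mor\cal W_{0})$ is generally not fibrant in $\SS^{+}$.
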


Corollary \ref{cor:intro3} is especially useful when $\cal C$ is
equal to the ordinary category $\cal C'_{0}$, the $0$th level of
$\cal C'$. In this case, the corollary says that if the functors
$N\pr{\cal C'_{0}}[N\pr{\cal W_{0}^{\p}}^{-1}]\to N\pr{\cal C'_{n}}[N\pr{\cal W_{n}^{\p}}^{-1}]$
are all categorical equivalences, then so is the functor
\[
N\pr{\cal C'_{0}}N[\pr{\cal W'_{0}}^{-1}]\to N_{\hc}\pr{\cal C'}[N\pr{\cal W'}^{-1}].
\]
In other words, the localization of a relative $\infty$-category
(the right hand side) is equivalent to a localization of an \textit{ordinary}
relative category (the left hand side). This recovers an observation
made by Lurie in the proof of \cite[Proposition 1.3.4.7]{HA}. We
also remark that in the same part of loc.cit., Lurie establishes a
very useful criterion for the maps $N\pr{\cal C'_{0}}[N\pr{\cal W_{0}^{\p}}^{-1}]\to N\pr{\cal C'_{n}}[N\pr{\cal W_{n}^{\p}}^{-1}]$
to be a categorical equivalence: This happens if $\cal C'$ admits
tensoring by $\Delta^{1}$ and $\cal W'$ contains all homotpy equivalences
of $\cal C'$.

The second application relates the homotopy coherent nerve functor
with Segal's classical construction of classifying spaces of simplicial
categories.
\begin{defn}
\label{def:B}\cite{Segal1968} Let $\cal C$ be a simplicial category.
We let $B\pr{\cal C}$ denote the diagonal of the bisimplicial set
$N_{\bi}\pr{\cal C}$; it is the \textbf{classifying space} of $\cal C$.
\end{defn}

In \cite[2.6.1]{Hinich07}, Hinich constructed a natural transformation $B\to
N_{\hc}$. (In fact, there is only one such natural transformation, as we will
see in Proposition \ref{prop:comparison}.) We then prove the following
comparison result:

\begin{cor}
[Corollary \ref{cor:appl2}]\label{cor:intro4}Let $\cal C$ be a
fibrant simplicial category. The map
\[
B\pr{\cal C}\to N_{\hc}\pr{\cal C}
\]
is a weak homotopy equivalence.
\end{cor}

In the special case where $\cal C$ is a simplicial groupoid, Corollary
\ref{cor:intro4} is a consequence of \cite[Theorem 3.6]{A23a} and
\cite[A.5.1]{Hinich01}. (It was also announced in \cite[Corollary
2.6.3]{Hinich07}, but its proof relies on 2.6.2 of loc. cit., which has a gap,
as pointed out in \cite{A23a}.) Our proof of Corollary \ref{cor:intro4} uses
different machinery from these earlier results, and this is why we were able
to relax the hypothesis.

\subsection*{Organization of the paper}

In Section \ref{sec:comparison}, we will construct the comparison
map $B\to N_{\hc}$. In Section \ref{sec:main}, we will prove the
main result. Section \ref{sec:Application} concerns the applications
of the main result.

\subsection*{Notation and convention}
\begin{itemize}
\item If $\cal C$ is a category, its nerve will be denoted by $N\pr{\cal C}$.
\item By a simplicial category, we mean a category enriched over the category
of simplicial sets.
\item If $\cal C$ is a simplicial category and $X,Y\in\cal C$ are its
objects, we will write $\cal C\pr{X,Y}$ or $\Map\pr{X,Y}$ for the
hom-simplicial set from $X$ to $Y$.
\item We understand that the category $\Cat_{\Delta}$ is equipped with
the Bergner model structure \cite{BergnerSimpCat}. 
\item We understand that the category $\SS^{+}$ is equipped with the model
structure for marked simplicial sets \cite[proposition 3.1.3.7, Remark 3.1.4.6]{HA}.
\item If $X$ is a bisimplicial set and $n\geq0$ is an integer, then the
$n$th \textbf{column} (resp. $n$th \textbf{row}) of $X$ is the
simplicial set $X_{n,\ast}$ (resp. $X_{\ast,n}$). The $n$th column
of $X$ is denoted by $X_{n}$.
\item If $\cal C$ is a simplicial category, its \textbf{homotopy coherent
nerve} $N_{\hc}\pr{\cal C}$ is the simplicial set whose $n$-simplices
are the simplicial functors $\fr C[\Delta^{n}]\to\cal C$. Here $\fr C[\Delta^{n}]$
denotes the simplicial category whose objects are the integers $0,\dots,n$.
If $0\leq i\leq j\leq n$ are integers, the hom-simplicial set $\fr C[\Delta^{n}]\pr{i,j}$
is the nerve of the poset $P_{i,j}=\{S\subset\{0,\dots,n\}\mid\min S=i,\,\max S=j\}$,
ordered by inclusion. The composition of $\fr C[\Delta^{n}]$ is induced
by the operation of union.\footnote{It is also common to define $\fr C[\Delta^{n}]\pr{i,j}$ to be the
nerve of the opposite of $P_{i,j}$. Our convention follows \cite{HTT}.
The only part that will be affected by the choice of conventions is
the proof of Proposition \ref{prop:comparison} and the descriptions
of maps appearing in Remarks \ref{rem:intuition} and \ref{rem:adjoint}.}
\end{itemize}

\section{\label{sec:comparison}The comparison map}

In this section, we will construct a comparison map $B\pr{\cal C}\to
N_{\hc}\pr{\cal C}$ (where $B$ is as in Definition \ref{def:B}), which will be
the source of all the other comparison maps we consider in this note. The
comparison map is obtained as the composite of two natural transformation
constructed in \cite[2.6.1]{Hinich07}, but we give a direct approach. In fact,
there is only one natural transformation $B\to N_{\hc}$:
\begin{prop}
\label{prop:comparison}There is a unique natural transformation $B\to N_{\hc}$
of functors $\Cat_{\Delta}\to\SS$.

\end{prop}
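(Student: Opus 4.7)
The plan is to invoke the Yoneda lemma to turn the statement into a classification of cosimplicial maps between two concrete cosimplicial objects of $\Cat_\Delta$, and then show there is exactly one such map. By definition $N_{\hc}\pr{\cal C}_n = \Hom_{\Cat_\Delta}\pr{\fr C[\Delta^n], \cal C}$, and unravelling $B\pr{\cal C}_n = N_{\bi}\pr{\cal C}_{n, n}$ shows that an $n$-simplex amounts to a sequence $X_0, \ldots, X_n$ of objects of $\cal C$ together with simplicial maps $\hat f_i : \Delta^n \to \cal C\pr{X_{i-1}, X_i}$ for $1 \leq i \leq n$. This data is the same as a simplicial functor $\cal P_n \to \cal C$, where $\cal P_n$ denotes the simplicial category with object set $\br{0, 1, \ldots, n}$, hom-simplicial sets $\cal P_n\pr{i, j} = \pr{\Delta^n}^{j - i}$ for $i \leq j$, and composition given by concatenation. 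As $n$ varies, $\cal P_n$ assembles into a cosimplicial object $\cal P_\bullet$ of $\Cat_\Delta$ whose coface maps, read off from the action of $\Del^{\op}$ on the diagonal of $N_{\bi}$, apply $d_k$ factorwise together with a diagonal duplication $\Delta^n \to \Delta^n \times \Delta^n$ at the ``inserted'' position. Hence Yoneda identifies natural transformations $B \To N_{\hc}$ with cosimplicial maps $\fr C[\Delta^\bullet] \to \cal P_\bullet$ in $\Cat_\Delta$.

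For existence, I would produce the explicit cosimplicial map $\phi$ that is the identity on objects and sends a vertex $S = \br{i = s_0 < s_1 < \cdots < s_k = j}$ of $P_{i, j}$ to the tuple $\phi_n\pr{S} \in [n]^{j - i}$ whose $l$-th coordinate is $\max\br{s \in S \mid s \leq i + l - 1}$. This assignment is monotone in $S$ (so extends from the poset $P_{i, j}$ to its nerve), respects composition (the union $T \cup T'$ of subsets corresponds to concatenation of tuples), and a direct check shows that it is natural in the cosimplicial variable.

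For uniqueness, let $\psi : \fr C[\Delta^\bullet] \to \cal P_\bullet$ be an arbitrary cosimplicial map. Cosimplicial compatibility with the coface maps immediately forces each $\psi_n$ to be the identity on objects. On hom-sets, composition in $\fr C[\Delta^n]$ lets one decompose every vertex $S = \br{s_0 < \cdots < s_k}$ of $P_{i, j}$ as the composite of the minima $\br{s_{r - 1}, s_r} \in P_{s_{r - 1}, s_r}$, so it suffices to determine $\psi_n$ on each minimum $\br{a, b}$ with $a < b$. Pulling back along a suitable $\theta : [b - a] \to [n]$ reduces this further to determining $\psi_m\pr{\br{0, m}} \in \cal P_m\pr{0, m} = \pr{\Delta^m}^m$, which I handle by induction on $m$: for $m \geq 2$, picking $0 < k < m$ and applying cosimplicial compatibility with $d_k$ gives $\psi_m\pr{\br{0, m}} = d_k\pr{\psi_{m-1}\pr{\br{0, m - 1}}}$, propagating from the base case $\psi_1\pr{\br{0, 1}} \in \Delta^1$. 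This base case is pinned down by comparing $\psi_2 \circ d_1$ with $d_1 \circ \psi_1$ inside $\cal P_2\pr{0, 2} = \Delta^2 \times \Delta^2$: only the choice $\psi_1\pr{\br{0, 1}} = 0$ yields an actual $1$-simplex there. The main technical obstacle is keeping careful track of the cosimplicial structure on $\cal P_\bullet$ at inserted positions; once that is laid out, the uniqueness cascade is a routine bookkeeping exercise.
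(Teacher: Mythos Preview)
Your proposal is correct and follows essentially the same approach as the paper: represent $B$ by the cosimplicial object $[n]_{\Delta^n}$ (your $\cal P_n$ is exactly the paper's $\fr B[\Delta^n]$), reduce via Yoneda to classifying cosimplicial maps $\fr C[\Delta^\bullet]\to\fr B[\Delta^\bullet]$, force the identity on objects, decompose vertices of $P_{i,j}$ into minima, reduce to the single choice in level~$1$, and rule out the wrong choice by examining level~$2$; your explicit formula agrees with the paper's up to the ordering convention for concatenation. One small point you should make explicit in the uniqueness argument: knowing $\psi_n$ on vertices determines $\psi_n$ on all of $\fr C[\Delta^n](i,j)$ only because both source and target are nerves of posets (so simplicial maps between them are the same as poset maps); the paper states this, and your write-up implicitly relies on it when passing from ``every vertex decomposes'' to ``it suffices to determine $\psi_n$ on minima.''
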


For the proof of Proposition \ref{prop:comparison} and for later
discussions, we introduce a bit of notation.
\begin{notation}
Let $n\geq0$ and let $K$ be a simplicial set. We define a simplicial
category $[n]_{K}$ as follows: Its objects are the integers $0,\dots,n$.
The hom-simplicial sets are given by
\[
[n]_{K}\pr{i,j}=\begin{cases}
\prod_{i<k\leq j}K & \text{if }i\leq j,\\
\emptyset & \text{if }i>j.
\end{cases}
\]
The composition is defined by concatenation. 
\end{notation}

\begin{rem}
Let $\cal C$ be a simplicial category and let $K$ be a simplicial
set. We can define an ordinary category $\cal C_{K}$ as follows:
The objects of $\cal C_{K}$ are the objects of $\cal C$. The hom-sets
are given by $\cal C_{K}\pr{X,Y}=\SS\pr{K,\cal C\pr{X,Y}}$. For each
$n\geq0$, a functor $[n]\to\cal C_{K}$ can be identified with a
simplicial functor $[n]_{K}\to\cal C$.
\end{rem}

\begin{proof}
For each $n\geq0$, set $\fr B[\Delta^{n}]=[n]_{\Delta^{n}}$. By
the Yoneda lemma, the simplicial categories $\{\fr B[\Delta^{n}]\}_{n\geq0}$
can be organized into a cosimplicial object of $\Cat_{\Delta}$ in
such a way that the functor $B$ is naturally isomorphic to $\Cat_{\Delta}\pr{\fr B[\Delta^{\bullet}],-}$.
It suffices to show that there is a unique morphism $\fr C[\Delta^{\bullet}]\to\fr B[\Delta^{\bullet}]$
of cosimplicial simplicial categories.

We begin by showing the uniqueness. Suppose there is a map $f:\fr C[\Delta^{\bullet}]\to\fr B[\Delta^{\bullet}]$
of cosimplicial simplicial categories. For each $n\geq0$, the map
$f_{n}:\fr C[\Delta^{n}]\to\fr B[\Delta^{n}]$ must act on the identity
maps on objects because $f_{n}$ is natural in $n$. If $0\leq i\leq j\leq n$
are integers, then the map
\[
f_{n}:\fr C[\Delta^{n}]\pr{i,j}\to\fr B[\Delta^{n}]\pr{i,j}
\]
is completely determined by its values of the vertices, for both $\fr C[\Delta^{n}]\pr{i,j}$
and $\fr B[\Delta^{n}]\pr{i,j}$ are nerves of posets. Since every
vertex of $\fr C[\Delta^{n}]\pr{i,j}$ is a composition of morphisms
in the image of maps $\fr C[\Delta^{1}]\to\fr C[\Delta^{n}]$, we
deduce that $f_{n}$ is completely determined by $f_{1}$. Now there
are exactly two simplicial functors $\fr C[\Delta^{1}]\to\fr B[\Delta^{1}]$
which are bijective on objects (because $\fr C[\Delta^{1}]\pr{0,1}=\Delta^{0}$
and $\fr B[\Delta^{1}]\pr{0,1}=\Delta^{1}$.) If $f_{1}$ carried
the unique vertex of $\fr C[\Delta^{1}]\pr{0,1}$ to the vertex $1\in\fr B[\Delta^{1}]\pr{0,1}$,
then the map
\[
f_{2}:\fr C[\Delta^{2}]\pr{0,2}\to\fr B[\Delta^{2}]\pr{0,2}
\]
would carry the vertices $\{0,2\}$ and $\{0,1,2\}$ to $\pr{2,2}$
and $\pr{2,1}$, respectively. But since there is no edge $\pr{2,2}\to\pr{2,1}$
in $\fr B[\Delta^{2}]\pr{0,2}$, this is impossible. Hence there is
only a unique choice for $f_{1}$, completing the proof of the uniqueness.

For existence, define $f_{n}:\fr C[\Delta^{n}]\to\fr B[\Delta^{n}]$
as follows: On objects, $f_{n}$ acts by the identity map. For each
$0\leq i\leq j\leq n$, the map $\fr C[\Delta^{n}]\pr{i,j}\to\fr B[\Delta^{n}]\pr{i,j}$
is the nerve of the poset map $P_{i,j}\to\underset{j-i\text{ times}}{\underbrace{[n]\times\cdots\times[n]}}$
which assigns to each element $\{i=i_{0}<\cdots<i_{k}=j\}\in P_{i,j}$
the element 
\[
(\underset{i_{k}-i_{k-1}\text{ times}}{\underbrace{i_{k-1},\dots,i_{k-1}}},\dots,\underset{i_{1}-i_{0}\text{ times}}{\underbrace{i_{0},\dots,i_{0}}})\in[n]\times\cdots\times[n].
\]
It is easy to check that the simplicial functors $\{f_{n}\}_{n\geq0}$
indeed define a map of cosimplicial objects of $\Cat_{\Delta}$. The
claim follows.
\end{proof}
\begin{rem}
\label{rem:intuition}Recall that the diagonal of a bisimplicial set
$X$ is equal to the coend $\int^{[n]\in\Del^{\op}}X_{\ast,n}\times\Delta^{n}$.
Therefore, given a simplicial category $\cal C$, the map $B\pr{\cal C}\to N_{\hc}\pr{\cal C}$
of Proposition \ref{prop:comparison} may equally well be specified
by a compatible family of maps $\{\phi_{n}:N\pr{\cal C_{n}}\times\Delta^{n}\to N_{\hc}\pr{\cal C}\}_{n\geq0}$.
Unwinding the definitions, the composite $N\pr{\cal C_{n}}\times\{i\}\hookrightarrow N\pr{\cal C_{n}}\times\Delta^{n}\to N_{\hc}\pr{\cal C}$
is equal to the composite
\[
N\pr{\cal C_{n}}\xrightarrow{i^{*}}N\pr{\cal C_{0}}\to N_{\hc}\pr{\cal C},
\]
where the first map is the restriction along the inclusion $[0]\cong\{i\}\hookrightarrow[n]$
and the second map is induced by the simplicial functor $\cal C_{0}\to\cal C$.
In other words, the map $\phi_{n}$ is the canonical natural transformation
between $n+1$ functors $N\pr{\cal C_{n}}\to N_{\hc}\pr{\cal C}$
corresponding to the $n+1$ elements of $[n]$.
\end{rem}

\begin{rem}
\label{rem:adjoint}Recall that the diagonal of a bisimplicial set
$X$ is equal to the coend $\int^{[n]\in\Del^{\op}}\Delta^{n}\times X_{n,\ast}$.
Therefore, given a simplicial category $\cal C$, the map $B\pr{\cal C}\to N_{\hc}\pr{\cal C}$
of Proposition \ref{prop:comparison} may equally well be specified
by a compatible family of maps $\{\psi_{n}:\Delta^{n}\times N_{\bi}\pr{\cal C}_{n}\to N_{\hc}\pr{\cal C}\}_{n\geq0}$.
Unwinding the definitions, the map $\psi_{n}$ admits the following
description:
\begin{enumerate}
\item If $n=0$, then $\psi_{n}$ is the inclusion $N_{\bi}\pr{\cal C}_{0}\cong\ob\cal C\hookrightarrow N_{\hc}\pr{\cal C}$.
\item Let $n\geq1$ and let $\sigma:\Delta^{m}\to N_{\bi}\pr{\cal C}_{n}$
be an $m$-simplex of $N_{\bi}\pr{\cal C}_{n}$, corresponding to
a simplicial functor $\sigma':[n]_{\Delta^{m}}\to\cal C$. Then the
composite
\[
\Delta^{n}\times\Delta^{m}\xrightarrow{\id\times\sigma}\Delta^{n}\times N_{\bi}\pr{\cal C}_{n}\xrightarrow{\psi_{n}}N_{\hc}\pr{\cal C}
\]
is adjoint to the composite
\[
\fr C[\Delta^{n}\times\Delta^{m}]\xrightarrow{\chi}[n]_{\Delta^{m}}\xrightarrow{\sigma'}\cal C.
\]
Here $\chi$ is defined on objects by $\chi\pr{i,j}=i$. Given integers
$0\leq i\leq i'\leq n$ and $0\leq j\leq j'\leq m$, let $P_{\pr{i,j},\pr{i',j'}}$
denote the poset of linearly ordered subsets $S\subset[n]\times[m]$
such that $\min S=\pr{i,j}$ and $\max S=\pr{i',j'}$. Then the simplicial
set $\fr C[\Delta^{n}\times\Delta^{m}]\pr{\pr{i,j},\pr{i',j'}}$ is
the nerve of $P_{\pr{i,j},\pr{i',j'}}$, and the map $\fr C[\Delta^{n}\times\Delta^{m}]\pr{\pr{i,j},\pr{i',j'}}\to[n]_{\Delta^{m}}\pr{i,i'}$
is induced by the poset map
\begin{align*}
P_{\pr{i,j},\pr{i',j'}} & \to\prod_{i<p\leq i'}[m],\\
S & \mapsto\pr{\max\{q\in[m]\mid\pr{i'',q}\in S\text{ for some }i''<p\}}_{i<p\leq i'}.
\end{align*}
\end{enumerate}
\end{rem}

\section{\label{sec:main}Main result}

In this section, we state and prove the main result of this paper.

Here is the statement of the main result.
\begin{thm}
\label{thm:main}Let $\cal C$ be a fibrant simplicial category and
let $\cal W\subset\cal C$ be a wide simplicial subcategory. The maps
$\{\psi_{n}:\Delta^{n}\times N_{\bi}\pr{\cal C}_{n}\to N_{\hc}\pr{\cal C}\}_{n\geq0}$
of Remark \ref{rem:adjoint} induces a weak equivalence
\[
\theta:N_{\bi}^{+}\pr{\cal C,\cal W}\to\Cls^{+}\pr{N_{\hc}\pr{\cal C},\mor\cal W_{0}}
\]
of $\BS_{\CSS}^{+}$.
\end{thm}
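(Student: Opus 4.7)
The plan is to reduce the verification of $\theta$ being a weak equivalence in $\BS^+_{\CSS}$ to a check in $\SS^+$ via the Quillen equivalence $\Cls^+ : \SS^+ \to \BS^+_{\CSS}$ from \cite{A23b}. Writing $L$ for its left adjoint, the map $\theta$ corresponds via adjunction to a map $\tilde\theta : L(N_{\bi}^+(\cal C, \cal W)) \to (N_{\hc}(\cal C), \mor \cal W_0)$ in $\SS^+$. Since $L$ computes the diagonal of a bisimplicial set on the underlying level, the underlying simplicial map of $\tilde\theta$ is precisely the comparison $B(\cal C) \to N_{\hc}(\cal C)$ from Proposition \ref{prop:comparison}. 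Once $\tilde\theta$ is shown to be a weak equivalence in $\SS^+$ (possibly after cofibrant replacement of the source), the Quillen equivalence $\Cls^+$ will yield the desired conclusion.

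To verify $\tilde\theta$ is a weak equivalence, I would first check the underlying map $B(\cal C) \to N_{\hc}(\cal C)$ is a weak homotopy equivalence. Unravelling Remark \ref{rem:adjoint}, the column map $\psi_n$ restricts on each component to a comparison between the iterated product $\cal C(X_{n-1}, X_n) \times \cdots \times \cal C(X_0, X_1)$ and a space of $n$-simplices in $N_{\hc}(\cal C)$ joining $X_0$ to $X_n$; for fibrant $\cal C$ this is a classical equivalence stemming from the Cordier--Porter description of mapping spaces in the homotopy coherent nerve. Assembling these column-wise equivalences through the coend presentation of the diagonal should yield the desired weak equivalence $B(\cal C) \to N_{\hc}(\cal C)$. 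To upgrade to the marked setting, I would check that $\tilde\theta$ sends marked edges to marked edges: marked $1$-simplices of $L(N_{\bi}^+(\cal C, \cal W))$ come from $\coprod_{X,Y} \cal W(X,Y)$, and by construction of $\psi_1$ these land in $\mor \cal W_0 \subset N_{\hc}(\cal C)_1$.

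The main obstacle will be the marked aspect, because the target $(N_{\hc}(\cal C), \mor \cal W_0)$ is generally not fibrant in Lurie's marked model structure on $\SS^+$: there, fibrant objects have markings consisting of equivalences only, whereas $\mor \cal W_0$ may contain non-equivalences. This precludes any naive ``check after forgetting the marking'' argument. I anticipate needing to characterize weak equivalences in $\SS^+$ via derived mapping spaces into fibrant marked quasi-categories, or to construct compatible fibrant replacements of source and target in $\SS^+$ and verify $\tilde\theta$ descends to a weak equivalence between them. The delicate point is tracking exactly which edges become marked via the coend defining $L$, and showing that the resulting marking on both sides localizes to equivalent subsets of the fibrant replacement of $N_{\hc}(\cal C)$ at $\cal W$.
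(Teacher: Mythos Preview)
Your reduction via the Quillen equivalence $\diag^{+}\dashv\Cls^{+}$ is correct and matches Step~2 of the paper's argument. However, the rest of your outline has a genuine gap: showing that the underlying map $B(\cal C)\to N_{\hc}(\cal C)$ is a \emph{weak homotopy equivalence} is neither necessary nor sufficient for $\tilde\theta$ to be a weak equivalence in $\SS^{+}$. Weak equivalences in $\SS^{+}$ model categorical equivalences after localizing at the marked edges, not Kan--Quillen equivalences; your column-wise coend argument, even if made precise, would only produce the latter, since $\diag$ is homotopical for the Kan--Quillen structure but not for the Joyal structure. You acknowledge this in your final paragraph, but the resolutions you sketch (derived mapping spaces into fibrant marked objects, compatible fibrant replacements) are precisely the hard part, and you give no concrete mechanism for carrying them out.

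The paper sidesteps this by first working with \emph{unmarked bisimplicial sets}: it shows that $N_{\bi}(\cal C)\to\Cls(N_{\hc}(\cal C))$ is a weak equivalence in $\BS_{\CSS}$ by checking that it is a Dwyer--Kan equivalence of Segal spaces, which needs only the $n=1$ Cordier--Porter comparison on hom-spaces rather than any coend assembly. From this the marked statement follows formally when $\cal W$ consists of the homotopy equivalences, since every map from either side into a complete Segal space $\cal X$ then automatically lands in $\cal X^{\natural}$. For $\cal W$ containing all homotopy equivalences, the paper observes that the marked edges of $B^{+}(\cal C,\cal W)$ are exactly the preimage of $\mor\cal W_{0}$ and that $B(\cal C)\to N_{\hc}(\cal C)$ is surjective on edges; combined with the previous case and the definition of marked equivalences, this finishes that case. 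The general $\cal W$ is reduced to this by enlarging $\cal W$ to contain the equivalences, which changes neither side up to weak equivalence. This stratification on the size of $\cal W$, together with the passage through $\BS_{\CSS}$ rather than $\SS$, is the idea missing from your proposal.
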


The remainder of this section is devoted to the proof of Theorem \ref{thm:main}.
We begin by establishing the unmarked version of Theorem \ref{thm:main}:
\begin{prop}
\label{prop:main_um}Let $\cal C$ be a fibrant simplicial category.
The maps $\{\psi_{n}:\Delta^{n}\times N_{\bi}\pr{\cal C}_{n}\to N_{\hc}\pr{\cal C}\}_{n\geq0}$
of Remark \ref{rem:adjoint} induces a weak equivalence
\[
N_{\bi}\pr{\cal C}\to\Cls\pr{N_{\hc}\pr{\cal C}}
\]
of $\BS_{\CSS}$.
\end{prop}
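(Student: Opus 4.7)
The plan is to reduce the statement, via the adjunction $d\dashv\Cls$ between bisimplicial sets and simplicial sets, to the classical fact that the comparison map $B\pr{\cal C}\to N_{\hc}\pr{\cal C}$ is a categorical equivalence, and then use a Quillen equivalence to transfer the result back.

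First, I would verify directly that $\Cls$ is the right adjoint of the diagonal $d\colon\BS\to\SS$: on representables, one computes
\[
\BS\pr{\Delta^{p}\boxtimes\Delta^{q},\Cls\pr Y}=\SS\pr{\Delta^{p}\times\Delta^{q},Y}=\SS\pr{d\pr{\Delta^{p}\boxtimes\Delta^{q}},Y},
\]
and the general case follows by colimit extension. Under the adjunction $d\dashv\Cls$, the map $\theta$ corresponds to a map $B\pr{\cal C}=d\pr{N_{\bi}\pr{\cal C}}\to N_{\hc}\pr{\cal C}$. Comparing with the explicit formula for $\psi_{n}$ in Remark \ref{rem:adjoint}---or, more slickly, using the uniqueness clause of Proposition \ref{prop:comparison}---one identifies this adjoint with the canonical comparison map of Proposition \ref{prop:comparison}.

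Next, I would invoke the Quillen equivalence $\pr{d,\Cls}\colon\BS_{\CSS}\rightleftarrows\SS$ (with $\SS$ carrying the Joyal model structure), as developed in Joyal--Tierney \cite{JT07}. Every bisimplicial set is cofibrant in $\BS_{\CSS}$, and $N_{\hc}\pr{\cal C}$ is a quasi-category---hence fibrant in the Joyal model structure---by the right Quillen equivalence $N_{\hc}\colon\Cat_{\Delta}\to\SS$ of \cite[\S2.2.5]{HTT}, applied to the fibrant simplicial category $\cal C$. The standard criterion characterising weak equivalences in a Quillen equivalence between a cofibrant source and a fibrant target then yields that $\theta$ is a weak equivalence in $\BS_{\CSS}$ if and only if its adjoint $B\pr{\cal C}\to N_{\hc}\pr{\cal C}$ is a Joyal equivalence.

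The remaining step, which is the principal obstacle, is to show that the canonical comparison map $B\pr{\cal C}\to N_{\hc}\pr{\cal C}$ is a Joyal equivalence for every fibrant simplicial category $\cal C$. This is a classical result essentially due to Joyal \cite{JoyalQCatsCat}. Conceptually, both functors $B,N_{\hc}\colon\Cat_{\Delta}\to\SS$ present the underlying $\infty$-category of a fibrant simplicial category---for $N_{\hc}$ via the right Quillen equivalence cited above, and for $B$ via the role of $N_{\bi}$ in Rezk's theory of complete Segal spaces. Since Proposition \ref{prop:comparison} supplies the unique natural transformation $B\to N_{\hc}$, and both functors are known to agree up to Joyal equivalence on fibrant objects, this transformation must itself be a natural Joyal equivalence when evaluated on fibrant simplicial categories, completing the proof.
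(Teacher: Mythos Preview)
Your reduction strategy has two genuine gaps.

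First, the adjunction $(d,\Cls):\BS_{\CSS}\rightleftarrows\SS_{\mathrm{Joyal}}$ is \emph{not} a Quillen adjunction. If it were, the right adjoint $\Cls$ would preserve fibrant objects; but for a quasi-category $X$ the column $\Cls(X)_{n}=X^{\Delta^{n}}$ is only a quasi-category, not a Kan complex, so $\Cls(X)$ fails to be Reedy fibrant and hence is not a complete Segal space. Equivalently, $d$ does not send Reedy trivial cofibrations to Joyal trivial cofibrations: the image of the generating map coming from an \emph{outer} horn $\Lambda_{0}^{n}\hookrightarrow\Delta^{n}$ is left anodyne but not a categorical equivalence. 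The Quillen equivalences of Joyal--Tierney between $\BS_{\CSS}$ and $\SS_{\mathrm{Joyal}}$ run through different adjunctions (e.g.\ the one whose right adjoint takes the $0$th row). The \emph{marked} adjunction $(\diag^{+},\Cls^{+})$ is a Quillen equivalence, but invoking it here would make the argument circular with the paper's main theorem, which is deduced from the present proposition.

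Second, even granting some form of the reduction, your final step is not a proof. Asserting that $B$ and $N_{\hc}$ ``are known to agree up to Joyal equivalence on fibrant objects'' and that the unique natural transformation must therefore be an equivalence is exactly the statement to be established; an unspecified zigzag of equivalences does not force a given direct map to be one, and uniqueness of the transformation does not close that gap. Note that the paper obtains only the \emph{weaker} fact that $B(\cal C)\to N_{\hc}(\cal C)$ is a weak homotopy equivalence, and obtains it as a \emph{corollary} of this proposition, not as an input.

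By contrast, the paper's argument is direct and elementary: after Reedy fibrant replacement both sides are Segal spaces, so by Rezk's criterion it suffices to check a Dwyer--Kan equivalence. Bijectivity on objects is immediate, and on mapping spaces the induced map is identified with the classical comparison $\cal C(X,Y)\to\Hom_{N_{\hc}(\cal C)}(X,Y)$, which is a homotopy equivalence. This isolates the one nontrivial input precisely, rather than packaging it inside an appeal to an external equivalence.
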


\begin{proof}
Observe that the Reedy fibrant replacement $N_{\bi}^{f}\pr{\cal C}$
of $N_{\bi}\pr{\cal C}$ is a Segal space; indeed, for each $n\geq2$,
the square 
\[\begin{tikzcd}
    {N_{\mathrm{bi}}(\mathcal{C})_n} & {N_{\mathrm{bi}}(\mathcal{C})_{n-1}} \\
    {N_{\mathrm{bi}}(\mathcal{C})_1} & {N_{\mathrm{bi}}(\mathcal{C})_0}
    \arrow[from=1-1, to=1-2]
    \arrow[from=1-1, to=2-1]
    \arrow[from=1-2, to=2-2]
    \arrow[from=2-1, to=2-2]
\end{tikzcd}\]induced by the inclusions $[1]\cong\{n-1<n\}\hookrightarrow[n]\hookleftarrow[n-1]$
is homotopy cartesian. Therefore, by \cite[Theorem 7.7]{Rezk01},
it suffices to show that the induced map $N_{\bi}^{f}\pr{\cal C}\to\opn{Cls}\pr{N_{\hc}\pr{\cal C}}$
is a Dwyer--Kan equivalence. Since the map $N_{\bi}\pr{\cal C}_{0,0}\to\Cls\pr{N_{\hc}\pr{\cal C}}_{0,0}$
is bijective, it suffices to show that the square 
\[\begin{tikzcd}
    {N_{\mathrm{bi}}(\mathcal{C})_1} & {\operatorname{Cls}(N_{\mathrm{hc}}(\mathcal{C}))_1} \\
    {N_{\mathrm{bi}}(\mathcal{C})_0\times N_{\mathrm{bi}}(\mathcal{C})_0} & {\operatorname{Cls}(N_{\mathrm{hc}}(\mathcal{C}))_0\times \operatorname{Cls}(N_{\mathrm{hc}}(\mathcal{C}))_0}
    \arrow[from=1-1, to=1-2]
    \arrow[from=1-2, to=2-2]
    \arrow[from=1-1, to=2-1]
    \arrow[from=2-1, to=2-2]
\end{tikzcd}\]is homotopy cartesian. For each pair of objects $X,Y\in\cal C$, the
induced map between the fibers of the vertical arrows over $\pr{X,Y}$
can be identified with the homotopy equivalence
\[
\cal C\pr{X,Y}\to\Hom_{N_{\hc}\pr{\cal C}}\pr{X,Y}
\]
of \cite[\href{https://kerodon.net/tag/01LF}{Tag 01LF}]{kerodon}
(see Remark \ref{rem:adjoint}). Hence the square is homotopy cartesian,
as required.
\end{proof}
Before we proceed, we recall a few facts on marked bisimplicial sets.
\begin{notation}
\cite[Definition 3.1]{A23b}\label{nota:t+} We let $\diag^{+}:\BS^{+}\to\SS^{+}$
denote the functor
\[
\diag^{+}\pr{X,S}=\pr{\diag\pr X,S_{1}},
\]
where $\diag\pr X=\{X_{n,n}\}_{n\geq0}$ denotes the diagonal of $X$.
The functor $\diag^{+}$ is the left adjoint of the functor $\Cls^{+}:\SS^{+}\to\BS^{+}$.
\end{notation}

\begin{thm}
\label{thm:CSS+}There is a simplicial model structure on $\BS^{+}$,
denoted by $\BS_{\CSS}^{+}$, which has the following properties:
\begin{enumerate}
\item The cofibrations are the monomorphisms.
\item The fibrant objects are the marked bisimplicial sets of the form $\cal X^{\natural}=\pr{\cal X,\cal X_{{\rm hoeq}}}$,
where $\cal X$ is a complete Segal space and $\cal X_{{\rm hoeq}}\subset\cal X_{1}$
is the union of components spanned by homotopy equivalences of $\cal X$.
\end{enumerate}
Moreover, the model structure has the following additional properties:

\begin{enumerate}
\setcounter{enumi}{2}

\item If $\cal X$ is a complete Segal space and $\pr{A,S}$ is a
marked bisimplicial set, then $\Map\pr{\pr{A,S},\cal X^{\natural}}$
is the component of $\Map\pr{A,\cal X}$ spanned by the maps $A\to\cal X$
which carries $S$ into $\cal X_{{\rm hoeq}}$. Here $\Map\pr{A,\cal X}$
denotes the simplicial enrichment of $\BS$ adapted to the complete
Segal space model structure \cite[$\S$2.3]{Rezk01}.

\item The adjunction $\diag^{+}:\BS_{\CSS}^{+}\adj\SS^{+}:\Cls^{+}$
is a Quillen equivalence.

\item The model structure $\BS_{\CSS}^{+}$ is a Bousfield localization
of the Reedy model structure on $\BS^{+}=\pr{\SS^{+}}^{\Del^{\op}}$.

\item The functor $\Cls^{+}:\SS^{+}\to\BS_{\CSS}^{+}$ preserves
and reflects all weak equivalences. 

\end{enumerate}
\end{thm}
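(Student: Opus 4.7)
The plan is to construct $\BS_{\CSS}^+$ as a left Bousfield localization of the Reedy model structure on $\pr{\SS^+}^{\Del^{\op}}$ at a carefully chosen small set of maps $T$. The Reedy structure exists because $\SS^+$, with the model structure for marked simplicial sets, is a combinatorial simplicial left proper model category. Its cofibrations are the objectwise cofibrations, which coincide with the monomorphisms of $\BS^+$, so Properties (1) and (5) will be automatic once the localization is performed.

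I would assemble $T$ from three families of maps: (i) marked versions of Rezk's Segal maps, forcing the underlying bisimplicial set of any $T$-local Reedy fibrant object to be a Segal space; (ii) the completeness map, upgrading Segal to complete Segal; (iii) marking-saturation maps designed so that, for a $T$-local fibrant $\pr{\cal X, S}$, the marking $S$ is forced to coincide with $\cal X_{\mathrm{hoeq}}$. Applying Hirschhorn's left Bousfield localization theorem then yields a combinatorial simplicial left proper model structure whose fibrant objects are precisely the $\cal X^{\natural}$ of Property (2), establishing (1), (2), and (5) simultaneously.

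For Property (3), the simplicial mapping space in a Bousfield localization into a fibrant object agrees with the underlying Reedy mapping space, and the condition on a map $A \to \cal X$ that it send $S$ into $\cal X_{\mathrm{hoeq}}$ is closed under simplicial homotopy because $\cal X_{\mathrm{hoeq}}$ is a union of components of $\cal X_1$; hence it detects a union of components of $\Map\pr{A, \cal X}$. For Property (4), I would check directly that $\diag^+$ preserves monomorphisms and sends the maps of $T$ to weak equivalences in $\SS^+$, so that $\pr{\diag^+, \Cls^+}$ is a Quillen adjunction. The Quillen equivalence then follows, via the classical Joyal--Tierney equivalence between the unmarked model structures, by tracking compatibility of markings on the two sides, which is built into the construction of $T$.

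The hardest step will be verifying Property (6): that $\Cls^+$ preserves and reflects all weak equivalences, not merely those between fibrant objects. Preservation between fibrant objects is automatic from the Quillen equivalence of (4). To extend to arbitrary objects, I would exploit that every object of $\SS^+$ is cofibrant and that $\Cls^+$ is a simplicial functor preserving filtered colimits, then compare a functorial $\BS_{\CSS}^+$-fibrant replacement of $\Cls^+\pr{Y}$ to $\Cls^+$ applied to a fibrant replacement of $Y$ in $\SS^+$, using the levelwise formula $\Cls^+\pr{X,S}_n = \pr{X, S}^{\Delta^n}$. The detailed execution, including the precise choice of marking-saturation maps in (iii), is carried out in \cite[Theorems 2.9 and 3.4]{A23b}, which the present statement recalls.
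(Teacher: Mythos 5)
Your proposal is correct in substance and follows essentially the same route as the paper: the paper gives no independent argument for this theorem, proving it purely by citations to \cite{A23b} (Theorem 2.9 for the model structure and its fibrant objects, Remark 2.7, and Theorems 3.4 and 4.2 for the remaining properties), and your plan---a left Bousfield localization of the Reedy structure on $\pr{\SS^{+}}^{\Del^{\op}}$ at Segal, completeness, and marking-saturation maps, with the details (including the genuinely nontrivial preservation/reflection property (6)) deferred to that same reference---is exactly the construction which that reference carries out and which property (5) of the statement records. One small caution: Reedy cofibrations are not in general the objectwise cofibrations, so your justification is off as stated; what is true, and what you need, is that for simplicial objects in $\SS^{+}$ the relative latching maps of a levelwise monomorphism are again monomorphisms, whence the Reedy cofibrations coincide with the monomorphisms of $\BS^{+}$.
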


\begin{proof}
The first half is established in \cite[Theorem 2.9]{A23b}. Point
(3) is proved in \cite[Remark 2.7]{A23b}, points (4) and (5) are
proved in \cite[Theorem 3.4]{A23b}, and point (6) is proved in \cite[Theorem 4.2]{A23b}.
\end{proof}
\begin{notation}
Let $\pr{\cal C,\cal W}$ be a relative simplicial category. We define
a marked bisimplicial set $B^{+}\pr{\cal C,\cal W}$ by 
\[
B^{+}\pr{\cal C,\cal W}=\diag^{+}N_{\bi}^{+}\pr{\cal C,\cal W}.
\]
\end{notation}

We now arrive at the proof of Theorem \ref{thm:main}.
\begin{proof}
[Proof of Theorem \ref{thm:main}]We prove the theorem in four steps.

\begin{enumerate}[label=(Step \arabic*), leftmargin = 0em, itemindent = 4em]

\item Assume first that $\cal W\subset\cal C$ is the smallest wide
simplicial subcategory containing all homotopy equivalences of $\cal C$.
If $\cal X$ is a complete Segal space, then every morphism $N_{\bi}\pr{\cal C}\to\cal X$
of bisimplicial sets induces a map $N_{\bi}^{+}\pr{\cal C,\cal W}\to\cal X^{\natural}$
of marked bisimplicial sets. Indeed, we only have to show that the
induced map $N\pr{\cal C_{0}}\to\cal X_{\ast,0}$ between the $0$th
row respects the markings (because $\cal X_{{\rm hoeq}}\subset\cal X$
is a union of components), which is obvious. Likewise, any map $\opn{Cls}\pr{N_{\hc}\pr{\cal C}}\to\cal X$
lifts to a morphism $\Cls^{+}\pr{N_{\hc}\pr{\cal C},N_{\hc}\pr{\cal W}}\to\cal X^{\natural}$.
Therefore, by properties (1) through (3) of Theorem \ref{thm:CSS+},
it suffices to show that the underlying map 
\[
N_{\bi}\pr{\cal C}\to\opn{Cls}\pr{N_{\hc}\pr{\cal C}}
\]
of $\theta$ is a weak equivalence of $\BS_{\CSS}$. This is nothing
but Proposition \ref{prop:main_um}.

\item According to Theorem \ref{thm:CSS+}, the functor $\Cls^{+}:\SS^{+}\to\BS_{\CSS}^{+}$
is a right Quillen equivalence and preserves all weak equivalences.
Therefore, $\theta$ is a weak equivalence of $\BS_{\CSS}^{+}$ if
and only if the map
\[
B^{+}\pr{\cal C,\cal W}\to\pr{N_{\hc}\pr{\cal C},\mor\cal W_{0}}
\]
which is adjoint to $\theta$ is a weak equivalence of $\SS^{+}$.

\item Suppose that $\cal W$ contains all homotopy equivalences of
$\cal C$. We observe that:
\begin{itemize}
\item The marked edges of $B^{+}\pr{\cal C,\cal W}$ are precisely the inverse
image of $\mor\cal W_{0}$ under the map $B\pr{\cal C}\to N_{\hc}\pr{\cal C}$;
this is because $\pr{\cal C,\cal W}$ is a relative simplicial category.
\item The map $B\pr{\cal C}\to N_{\hc}\pr{\cal C}$ induces a surjection
between the set of edges; this follows by inspection.
\end{itemize}
Since $\mor\cal W_{0}$ contains all equivalences of $N_{\hc}\pr{\cal C}$,
the claim now follows by combining the above observations, Steps 1
and 2, and the definition of weak equivalences of marked simplicial
sets.

\item We prove the theorem in the general case. Let $\cal W'\subset\cal C$
denote the smallest wide simplicial subcategory containing $\cal W$
and all homotopy equivalences of $\cal C$. The map $N_{\bi}^{+}\pr{\cal C,\cal W}\to N_{\bi}^{+}\pr{\cal C,\cal W'}$
is a weak equivalence of $\BS_{\CSS}^{+}$, and the map $\pr{N_{\hc}\pr{\cal C},\mor\cal W_{0}}\to\pr{N_{\hc}\pr{\cal C},\mor\cal W_{0}'}$
is a weak equivalence of $\SS^{+}$. The claim now follows from part
(6) of Theorem \ref{thm:CSS+}.

\end{enumerate}
\end{proof}
\begin{rem}
Let $\cal C$ be a fibrant simplicial category and let $\cal W\subset\cal C$
be a wide simplicial subcategory. As we saw in Step 2 of the proof
of Theorem \ref{thm:main}, Theorem \ref{thm:main} implies that the
map
\[
B^{+}\pr{\cal C,\cal W}\to\pr{N_{\hc}\pr{\cal C},\mor\cal W_{0}}
\]
is a weak equivalence of marked simplicial sets. Since the geometric
realization functor models homotopy colimits in simplicial model categories
(\cite[Theorem 5.2.3]{cathtpy}, \cite[Lemma 15.3.9]{Hirschhorn}),
and since $\diag^{+}:\BS^{+}\to\SS^{+}$ is nothing but the geometric
realization functor, we may interpret Theorem \ref{thm:main} as saying
that the localization $N_{\hc}\pr{\cal C}[N_{\hc}\pr{\cal W}^{-1}]$
is a homotopy colimit of the simplicial $\infty$-category $[n]\mapsto N\pr{\cal C_{n}}[N\pr{\cal W_{n}}^{-1}]$.
This point of view was articulated by Lurie in \cite[Proposition 1.3.4.14]{HA};
in fact, Theorem \ref{thm:main} can also be proved using Lurie's
result (and Theorem \ref{thm:CSS+}), though the proof will be a little
longer.
\end{rem}

\section{\label{sec:Application}Applications}

We now list two applications of Theorem \ref{thm:main}.
\begin{cor}
\label{cor:appl1}Let $\pr{\cal C,\cal W}$ and $\pr{\cal C',\cal W'}$
be relative simplicial categories, where $\cal C$ and $\cal C'$
are fibrant. Let $f:\cal C\to\cal C'$ be a simplicial functor which
carries $\cal W$ into $\cal W'$. If for each $n\geq0$, the map
\[
\pr{N\pr{\cal C_{n}},\mor\cal W_{n}}\to\pr{N\pr{\cal C'_{n}},\mor\cal W'_{n}}
\]
is a weak equivalence of marked simplicial sets, then the map
\[
\pr{N_{\hc}\pr{\cal C},\mor\cal W_{0}}\to\pr{N_{\hc}\pr{\cal C'},\mor\cal W'_{0}}
\]
is also a weak equivalence.
\end{cor}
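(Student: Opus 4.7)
The plan is to reduce the claim, via Theorem \ref{thm:main}, to an assertion about $N_{\bi}^+$ in $\BS_{\CSS}^+$, and then exploit the fact that $\Cls^+$ reflects weak equivalences.

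First, I would unravel the identification $\BS^+\cong\pr{\SS^+}^{\Del^\op}$ and identify the $n$-th term of $N_{\bi}^+\pr{\cal C,\cal W}$, viewed as a simplicial object in $\SS^+$, with the marked simplicial set $\pr{N\pr{\cal C_n},\mor\cal W_n}$. This is immediate from the definitions: the $n$-th row of $N_{\bi}\pr{\cal C}$ is $N\pr{\cal C_n}$, whose $1$-simplices are the morphisms of $\cal C_n$, and the marked ones among these are precisely $\mor\cal W_n\subset\mor\cal C_n$. Under this identification, the hypothesis of the corollary asserts exactly that the induced map $N_{\bi}^+\pr{\cal C,\cal W}\to N_{\bi}^+\pr{\cal C',\cal W'}$ is a levelwise, hence Reedy, weak equivalence in $\BS^+$. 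Since $\BS_{\CSS}^+$ is a Bousfield localization of the Reedy model structure by Theorem \ref{thm:CSS+}(5), this map is also a weak equivalence of $\BS_{\CSS}^+$.

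Next I would use naturality of $\theta$ in Theorem \ref{thm:main} to form the commutative square
\[
\begin{tikzcd}
N_{\bi}^+\pr{\cal C,\cal W}\ar[r,"\theta"]\ar[d] & \Cls^+\pr{N_{\hc}\pr{\cal C},\mor\cal W_0}\ar[d] \\
N_{\bi}^+\pr{\cal C',\cal W'}\ar[r,"\theta"] & \Cls^+\pr{N_{\hc}\pr{\cal C'},\mor\cal W_0'}
\end{tikzcd}
\]
in $\BS_{\CSS}^+$. The horizontal arrows are weak equivalences by Theorem \ref{thm:main}, and the left vertical arrow is a weak equivalence by the previous paragraph. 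By two-out-of-three, the right vertical arrow is a weak equivalence of $\BS_{\CSS}^+$. Applying Theorem \ref{thm:CSS+}(6), which says that $\Cls^+$ reflects weak equivalences, we conclude that $\pr{N_{\hc}\pr{\cal C},\mor\cal W_0}\to\pr{N_{\hc}\pr{\cal C'},\mor\cal W_0'}$ is a weak equivalence of $\SS^+$.

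The argument is a formal consequence of Theorem \ref{thm:main} together with the listed properties of $\BS_{\CSS}^+$; all of the substantive work has already been carried out in the proof of Theorem \ref{thm:main}. The only point requiring any care is the bookkeeping identification of the rows of $N_{\bi}^+\pr{\cal C,\cal W}$ as marked simplicial sets, which is immediate from unwinding the simplicial-object interpretation of marked bisimplicial sets, so no step poses a serious obstacle.
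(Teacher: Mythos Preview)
Your proof is correct and follows essentially the same route as the paper: identify the rows of $N_{\bi}^{+}\pr{\cal C,\cal W}$ with $\pr{N\pr{\cal C_n},\mor\cal W_n}$, use part (5) of Theorem \ref{thm:CSS+} to promote the levelwise weak equivalence to one in $\BS_{\CSS}^{+}$, apply Theorem \ref{thm:main}, and conclude via part (6) of Theorem \ref{thm:CSS+}. The only difference is cosmetic: you make the naturality square and the two-out-of-three step explicit, whereas the paper leaves these implicit.
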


\begin{proof}
By hypothesis, the map $N_{\bi}^{+}\pr{\cal C,\cal W}\to N_{\bi}^{+}\pr{\cal C',\cal W'}$
induces a weak equivalence of marked simplicial sets in each row.
Thus, by part (5) of Theorem \ref{thm:CSS+}, it is a weak equivalence
of $\BS_{\CSS}^{+}$. It follows from Theorem \ref{thm:main} that
the map $\Cls^{+}\pr{N_{\hc}\pr{\cal C},\mor\cal W_{0}}\to\Cls^{+}\pr{N_{\hc}\pr{\cal C'},\mor\cal W'_{0}}$
is a weak equivalence of $\BS_{\CSS}^{+}$. Using part (6) of Theorem
\ref{thm:CSS+}, we deduce that the map $\pr{N_{\hc}\pr{\cal C},\mor\cal W_{0}}\to\pr{N_{\hc}\pr{\cal C'},\mor\cal W'_{0}}$
is also a weak equivalence of $\SS^{+}$, and we are done.
\end{proof}

\begin{rem} 
We do not know if the converse of Corollary \ref{cor:appl1} holds. We expect
that this is false, given that the proof of the corollary relies on point (5)
of Theorem \ref{thm:CSS+}, which only gives a \textit{sufficient} condition
for a map of $\BS_{\CSS}^{+}$ to be a weak equivalence. However, we are not
aware of explicit counterexamples.
\end{rem}

\begin{cor}
\label{cor:appl2}Let $\cal C$ be a fibrant simplicial category.
The map
\[
B\pr{\cal C}\to N_{\hc}\pr{\cal C}
\]
of Proposition \ref{prop:comparison} is a weak homotopy equivalence.
\end{cor}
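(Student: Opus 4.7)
The plan is to apply Theorem \ref{thm:main} to the pair $\pr{\cal C, \cal C}$, where $\cal C$ is regarded as a wide simplicial subcategory of itself (the two conditions of Definition \ref{def:relscat} are trivially satisfied). This yields a weak equivalence $\theta: N_{\bi}^{+}\pr{\cal C, \cal C} \to \Cls^{+}\pr{N_{\hc}\pr{\cal C}, \mor \cal C_0}$ in $\BS_{\CSS}^{+}$. By Step 2 of the proof of Theorem \ref{thm:main} (invoking that $\Cls^{+}$ is a right Quillen equivalence that moreover preserves all weak equivalences, per Theorem \ref{thm:CSS+}), this is equivalent to asserting that the adjoint map $B^{+}\pr{\cal C, \cal C} \to \pr{N_{\hc}\pr{\cal C}, \mor \cal C_0}$ is a weak equivalence in $\SS^{+}$.

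Next, I would identify both sides with sharp-marked simplicial sets. The marking on $N_{\bi}^{+}\pr{\cal C, \cal C}$ is the entirety of $N_{\bi}\pr{\cal C}_{1, \ast} = \coprod_{X, Y} \cal C\pr{X, Y}$; after applying $\diag^{+}$, this marks all of $B\pr{\cal C}_1 = \coprod_{X, Y} \cal C\pr{X, Y}_1$, so $B^{+}\pr{\cal C, \cal C} = B\pr{\cal C}^{\sharp}$. On the other side, a $1$-simplex of $N_{\hc}\pr{\cal C}$ is a simplicial functor $\fr C[\Delta^{1}] \to \cal C$, and since $\fr C[\Delta^{1}]\pr{0, 1} = N\pr{\{\{0, 1\}\}} = \Delta^{0}$, such data amounts to a morphism of the ordinary category $\cal C_0$. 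Hence $\mor \cal C_0 = N_{\hc}\pr{\cal C}_1$, so $\pr{N_{\hc}\pr{\cal C}, \mor \cal C_0} = N_{\hc}\pr{\cal C}^{\sharp}$. The explicit description of $\psi_n$ in Remark \ref{rem:adjoint} then confirms that, under these identifications, the adjoint map is precisely the comparison map $B\pr{\cal C} \to N_{\hc}\pr{\cal C}$ of Proposition \ref{prop:comparison}, viewed as a map of sharp-marked simplicial sets.

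Finally, I would convert the weak equivalence between sharp-marked objects in $\SS^{+}$ into a weak homotopy equivalence of underlying simplicial sets. For any Kan complex $K$, the object $K^{\sharp}$ is fibrant in $\SS^{+}$ (all edges of $K$ are equivalences), and the marked function complex from $X^{\sharp}$ to $K^{\sharp}$ coincides with $\Fun\pr{X, K}$. Hence the weak equivalence $B\pr{\cal C}^{\sharp} \to N_{\hc}\pr{\cal C}^{\sharp}$ forces $\Fun\pr{N_{\hc}\pr{\cal C}, K} \to \Fun\pr{B\pr{\cal C}, K}$ to be a weak homotopy equivalence for every Kan complex $K$, which by the standard Yoneda-style criterion delivers the desired weak homotopy equivalence $B\pr{\cal C} \to N_{\hc}\pr{\cal C}$. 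The only nontrivial work will be the marking bookkeeping in the middle paragraph; essentially all of the substantive content is absorbed into Theorem \ref{thm:main}.
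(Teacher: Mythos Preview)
Your proposal is correct and follows essentially the same route as the paper: apply Theorem~\ref{thm:main} to a suitable wide subcategory, pass to the adjoint map in $\SS^{+}$, and then extract the underlying weak homotopy equivalence. The only difference is cosmetic: the paper takes $\cal W=\cal C^{\simeq}$ (so the target is $N_{\hc}\pr{\cal C}^{\natural}$) and simply quotes that weak equivalences in $\SS^{+}$ induce weak homotopy equivalences on underlying simplicial sets \cite[Proposition 3.1.3.3]{HTT}, whereas you take $\cal W=\cal C$ (so both sides become $\sharp$-marked) and spell out the passage to underlying simplicial sets via mapping into $K^{\sharp}$ for Kan $K$. Your final step could be shortened by citing the same proposition.
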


\begin{proof}
By definition, every weak equivalence of marked simplicial sets induces
a weak homotopy equivalence between the underlying simplicial sets
\cite[Proposition 3.1.3.3]{HTT}. It will therefore suffice to show
that the map $B^{+}\pr{\cal C,\cal C^{\simeq}}\to\pr{N_{\hc}\pr{\cal C},\pr{\cal C^{\simeq}}_{0}}=N_{\hc}\pr{\cal C}^{\natural}$
is a weak equivalence of marked simplicial sets, where $\cal C^{\simeq}\subset\cal C$
denotes the smallest wide simplicial subcategory containing all equivalences
of $\cal C$. This is immediate from Theorem \ref{thm:main} (and
part (4) of Theorem \ref{thm:CSS+}).
\end{proof}

\section*{Acknowledgment}

The author is grateful to the anonymous referee for carefully reading the
manuscript and making valuable suggestions. He also appreciates Daisuke
Kishimoto and Mitsunobu Tsutaya for commenting on earlier drafts of this
paper.


\begin{thebibliography}{10}

\bibitem{A23b}
K.~Arakawa.
\newblock Classification diagrams of marked simplicial sets.
\newblock \url{https://arxiv.org/abs/2311.01101}, 2023.

\bibitem{A23a}
K.~Arakawa.
\newblock Classifying space via homotopy coherent nerve.
\newblock {\em Homology Homotopy Appl.}, 25(2):373--381, 2023.

\bibitem{AC25}
K.~Arakawa and B.~Cnossen.
\newblock A short proof of the universality of the relative Rezk nerve.
\newblock \url{https://arxiv.org/abs/2505.14123}, 2025.
\newblock To appear in {\em Proc. Amer. Math. Soc.}

\bibitem{MR2877402}
C.~Barwick and D.~M. Kan.
\newblock A characterization of simplicial localization functors and a
  discussion of {DK} equivalences.
\newblock {\em Indag. Math. (N.S.)}, 23(1-2):69--79, 2012.

\bibitem{BergnerSimpCat}
J.~E. Bergner.
\newblock A model category structure on the category of simplicial categories.
\newblock {\em Trans. Amer. Math. Soc.}, 359(5):2043--2058, 2007.

\bibitem{MR2439415}
J.~E. Bergner.
\newblock Complete {S}egal spaces arising from simplicial categories.
\newblock {\em Trans. Amer. Math. Soc.}, 361(1):525--546, 2009.

\bibitem{Cordier_hcnerve}
J.-M. Cordier.
\newblock Sur la notion de diagramme homotopiquement coh\'erent.
\newblock {\em Cahiers de Topologie et G\'eom\'etrie Diff\'erentielle
  Cat\'egoriques}, 23(1):93--112, 1982.

\bibitem{DK80_1}
W.~G. Dwyer and D.~M. Kan.
\newblock Simplicial localizations of categories.
\newblock {\em J. Pure Appl. Algebra}, 17(3):267--284, 1980.

\bibitem{Hinich01}
V.~Hinich.
\newblock D{G} coalgebras as formal stacks.
\newblock {\em J. Pure Appl. Algebra}, 162(2-3):209--250, 2001.

\bibitem{Hinich07}
V.~Hinich.
\newblock Homotopy coherent nerve in deformation theory.
\newblock \url{https://arxiv.org/abs/0704.2503}, 2007.

\bibitem{Hirschhorn}
P.~Hirschhorn.
\newblock {\em Model Categories and Their Localizations}.
\newblock Mathematical Surveys and Monographs. American Mathematical Society,
  2003.

\bibitem{Joyal_qcat_Kan}
A.~Joyal.
\newblock Quasi-categories and {K}an complexes.
\newblock {\em J. Pure Appl. Algebra}, 175(1-3):207--222, 2002.
\newblock Special volume celebrating the 70th birthday of Professor Max Kelly.

\bibitem{JoyalQCatsCat}
A.~Joyal.
\newblock Quasi-categories vs simplicial categories.
\newblock
  \url{https://www.math.uchicago.edu/~may/IMA/Incoming/Joyal/QvsDJan9(2007).pdf},
  2007.

\bibitem{JT07}
A.~Joyal and M.~Tierney.
\newblock Quasi-categories vs {S}egal spaces.
\newblock In {\em Categories in algebra, geometry and mathematical physics},
  volume 431 of {\em Contemp. Math.}, pages 277--326. Amer. Math. Soc.,
  Providence, RI, 2007.

\bibitem{Landoo-cat}
M.~Land.
\newblock {\em Introduction to infinity-categories}.
\newblock Compact Textbooks in Mathematics. Birkh\"auser/Springer, Cham, [2021]
  \copyright2021.

\bibitem{LM15}
Z.~L. Low and A.~Mazel-Gee.
\newblock From fractions to complete {S}egal spaces.
\newblock {\em Homology Homotopy Appl.}, 17(1):321--338, 2015.

\bibitem{HTT}
J.~Lurie.
\newblock {\em Higher topos theory}, volume 170 of {\em Annals of Mathematics
  Studies}.
\newblock Princeton University Press, Princeton, NJ, 2009.

\bibitem{HA}
J.~Lurie.
\newblock Higher algebra.
\newblock \url{https://www.math.ias.edu/~lurie/papers/HA.pdf}, 2017.

\bibitem{kerodon}
J.~Lurie.
\newblock Kerodon.
\newblock \url{https://kerodon.net}, 2024.

\bibitem{MR4045352}
A.~Mazel-Gee.
\newblock The universality of the {R}ezk nerve.
\newblock {\em Algebr. Geom. Topol.}, 19(7):3217--3260, 2019.

\bibitem{Rezk01}
C.~Rezk.
\newblock A model for the homotopy theory of homotopy theory.
\newblock {\em Trans. Amer. Math. Soc.}, 353(3):973--1007, 2001.

\bibitem{cathtpy}
E.~Riehl.
\newblock {\em Categorical homotopy theory}, volume~24 of {\em New Mathematical
  Monographs}.
\newblock Cambridge University Press, Cambridge, 2014.

\bibitem{Segal1968}
G.~Segal.
\newblock Classifying spaces and spectral sequences.
\newblock {\em Inst. Hautes \'Etudes Sci. Publ. Math.}, (34):105--112, 1968.

\end{thebibliography}
\end{document}